\numberwithin{equation}{section}
\theoremstyle{plain}
\newtheorem{theorem}{Theorem}[section]
\newtheorem{lemma}[theorem]{Lemma}
\theoremstyle{definition}
\newtheorem{definition}[theorem]{Definition}
\newtheorem{case[theorem]}{Case}
\theoremstyle{remark}
\newtheorem{remark}[theorem]{Remark}
\numberwithin{equation}{section}
\begin{document}

\title{Dot products in ${\Bbb F}_q^3$ and the Vapnik-Chervonenkis dimension}


\author{A. Iosevich, B. McDonald, and M. Sun}

\date{\today}

\address{Department of Mathematics, University of Rochester, Rochester, NY}
\email{iosevich@gmail.com}

\address{Department of Mathematics, University of Rochester, Rochester, NY}
\email{bmcdonald3879@gmail.com }

\address{Harley School}
\email{maxwell.sun.2022@harleystudents.org}

\thanks{The first listed author's research was partially supported by the NSF HDR Tripods 19343963. This research was initiated during the Tripods/StemForAll 2021 REU.}

\maketitle

\begin{abstract} Given a set $E \subset {\Bbb F}_q^3$, where ${\Bbb F}_q$ is the field with $q$ elements. Consider a set of "classifiers" ${\mathcal H}^3_t(E)=\{h_y: y \in E\}$, where $h_y(x)=1$ if $x \cdot y=t$, $x \in E$, and $0$ otherwise. We are going to prove that if $|E| \ge Cq^{\frac{11}{4}}$, with a sufficiently large constant $C>0$, then the Vapnik-Chervonenkis dimension of ${\mathcal H}^3_t(E)$ is equal to $3$. In particular, this means that for sufficiently large subsets of ${\Bbb F}_q^3$, the Vapnik-Chervonenkis dimension of ${\mathcal H}^3_t(E)$ is the same as the Vapnik-Chervonenkis dimension of ${\mathcal H}^3_t({\Bbb F}_q^3)$. In some sense the proof leads us to consider the most complicated possible configuration that can always be embedded in subsets of ${\Bbb F}_q^3$ of size $\ge Cq^{\frac{11}{4}}$. 

\vskip.125in 

This paper is dedicated to the Ukrainian people who are suffering the effects of a brutal aggression. 

\end{abstract} 


\section{Introduction} 

\vskip.125in 

The purpose of this paper is to study the Vapnik-Chervonenkis dimension in the context of a naturally arising family of functions on subsets of the three-dimensional vector space over the finite field with $q$ elements, denoted by ${\Bbb F}_q^3$. Let us begin by recalling some definitions and basic results (see e.g. \cite{DS14}, Chapter 6). 

\begin{definition} \label{shatteringdef} Let $X$ be a set and ${\mathcal H}$ a collection of functions from $X$ to $\{0,1\}$. We say that ${\mathcal H}$ shatters a finite set $C \subset X$ if the restriction of ${\mathcal H}$ to $C$ yields every possible function from $C$ to $\{0,1\}$. \end{definition} 

\vskip.125in 

\begin{definition} \label{vcdimdef} Let $X$ and ${\mathcal H}$ be as above. We say that a non-negative integer $n$ is the VC-dimension of ${\mathcal H}$ if there exists a set $C \subset X$ of size $n$ that is shattered by ${\mathcal H}$, and no subset of $X$ of size $n+1$ is shattered by ${\mathcal H}$. \end{definition} 

\vskip.125in 

We are going to work with a class of functions ${\mathcal H}^d_t$, where $t \not=0$. Let $X={\Bbb F}_q^d$, and define 
\begin{equation} \label{functionclassdef} {\mathcal H}_t^d=\{h_y: y \in {\Bbb F}_q^d \}, \end{equation} where $y \in {\Bbb F}_q^d$, and $h_y(x)=1$ if $x \cdot y=t$, and $0$ otherwise. Let ${\mathcal H}_t^2(E)$ be defined the same way, but with respect to a set $E \subset {\Bbb F}_q^d$ i.e 
$$ {\mathcal H}^d_t(E)=\{h_y: y \in E\},$$ where $h_y(x)=1$ if $x \cdot y=t$ ($x \in E$), and $0$ otherwise. 

\vskip.125in 

Our main result is the following. 

\begin{theorem} \label{main} Let ${\mathcal H}^3_t(E)$ be defined as above with respect to $E \subset {\Bbb F}_q^3$, $t \not=0$. If 
$|E| \ge Cq^{\frac{11}{4}}$, for some large enough constant $C$, then the VC-dimension of ${\mathcal H}^3_t(E)$ is equal to $3$. \end{theorem}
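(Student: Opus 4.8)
\emph{Overview of the plan.} I would prove the two inequalities $\mathrm{VC}(\mathcal H^3_t(E))\le 3$ and $\mathrm{VC}(\mathcal H^3_t(E))\ge 3$ separately. The upper bound is a purely linear-algebraic fact that holds for every $E$ (indeed for the full family $\mathcal H^3_t(\mathbb{F}_q^3)\supseteq \mathcal H^3_t(E)$), while the lower bound is where the hypothesis $|E|\ge Cq^{11/4}$ is consumed, through a character-sum count. For the upper bound, fix any $x_1,x_2,x_3,x_4\in E$. First observe that shattering a triple forces its three vectors to be linearly independent: if, say, $x_k=\alpha x_i+\beta x_j$, then $x_k\cdot y$ is determined by $x_i\cdot y$ and $x_j\cdot y$, so on the set $\{y:\,x_i\cdot y=x_j\cdot y=t\}$ the value $x_k\cdot y$ is pinned down and the patterns $(1,1,1)$ and $(1,1,0)$ cannot both be realized; hence that triple, and therefore the whole four-point set, is not shattered. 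If instead every triple is independent, then $y\mapsto(x_1\cdot y,x_2\cdot y,x_3\cdot y)$ is a bijection of $\mathbb{F}_q^3$, so there is a unique $y_0$ with $x_1\cdot y_0=x_2\cdot y_0=x_3\cdot y_0=t$; realizing $(1,1,1,1)$ requires $x_4\cdot y_0=t$ while realizing $(1,1,1,0)$ requires the same $y_0$ to satisfy $x_4\cdot y_0\ne t$, a contradiction. Either way no four points are shattered, giving $\mathrm{VC}\le 3$.

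\emph{Lower bound: reduction.} To get $\mathrm{VC}\ge 3$ I must exhibit one independent triple $x_1,x_2,x_3\in E$ that is shattered, i.e.\ for each $S\subseteq\{1,2,3\}$ a witness $y_S\in E$ with $x_i\cdot y_S=t$ exactly for $i\in S$. I would organize the eight patterns by $|S|$. The pattern $S=\emptyset$ never fails: its failure means $E\subseteq H_1\cup H_2\cup H_3$, where $H_i=\{y:\,x_i\cdot y=t\}$, which is impossible once $|E|>3q^2$. A pattern with $|S|=1$, say $S=\{1\}$, fails only if $E\cap H_1$ lies in the two lines $H_1\cap H_2$ and $H_1\cap H_3$, forcing $|E\cap H_1|\le 2q$, which happens only for a controlled set of ``poor'' directions $x_1$. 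The genuinely binding patterns are $|S|=2$ and $|S|=3$: the all-ones witness $y_{123}$ is the unique preimage of $(t,t,t)$, so it must literally lie in $E$, and given $y_{123}\in E$ a two-ones pattern such as $\{1,2\}$ fails precisely when the affine line $L_{12}=\{y:\,x_1\cdot y=x_2\cdot y=t\}$ meets $E$ only at $y_{123}$.

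\emph{Lower bound: counting.} Rather than construct a triple by hand, I would count. Let $N$ be the number of independent triples $(x_1,x_2,x_3)\in E^3$ whose all-ones witness lies in $E$. Expanding $\mathbf{1}[x\cdot y=t]=q^{-1}\sum_{s\in\mathbb{F}_q}\chi(s(x\cdot y-t))$ with a nontrivial additive character $\chi$, and estimating the resulting sums via Plancherel $\sum_{\xi}|\widehat E(\xi)|^2=q^{-3}|E|$ together with Cauchy--Schwarz, this count has a main term of order $|E|^4/q^3$. Subtracting from $N$ the number of triples that fail at least one of the remaining seven patterns—bounded by the same Fourier machinery, the dominant contributions coming from the $|S|=2$ ``line misses $E$'' events and the ``poor hyperplane'' events above—leaves a strictly positive count as soon as every error term is dominated by $|E|^4/q^3$, and any surviving triple witnesses $\mathrm{VC}\ge 3$.

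\emph{Main obstacle.} The hard part is the joint estimate for the $|S|=2$ failures: one must bound the number of $(x_1,x_2,x_3)$ for which $y_{123}\in E$ and yet the line $L_{12}$ avoids $E\setminus\{y_{123}\}$, i.e.\ a simultaneous incidence condition coupling ``a prescribed point lies in $E$'' with ``a prescribed line through it is otherwise disjoint from $E$.'' Controlling the associated mixed character sum and showing its error term is beaten by $|E|^4/q^3$ is exactly what forces the threshold, and I expect the balance of main term against this error to be precisely what yields the exponent $\tfrac{11}{4}$.
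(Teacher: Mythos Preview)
Your upper bound is correct and more explicit than the paper's one-line remark. For the lower bound you have only a plan, and the step you yourself flag as the ``main obstacle'' is a genuine gap your sketch does not close. The failure event for an $|S|=2$ pattern is ``the line $L_{12}=\{y:x_1\cdot y=x_2\cdot y=t\}$ meets $E$ only at $y_{123}$,'' a \emph{negative} incidence condition. Character sums naturally compute $|L_{12}\cap E|$, not the event that it equals $1$, so ``the same Fourier machinery'' does not directly bound the number of failing triples; you would need at least a second-moment argument on line--$E$ incidences, coupled to the condition $y_{123}\in E$, and you have not carried this out. Nor is this where the exponent $\tfrac{11}{4}$ actually arises.

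The paper proceeds by positive rather than subtractive counting. After pruning $E$ so that every point has dot-product degree $\asymp |E|/q$, it counts $5$-tuples $(x,y,z,u,v)\in E^5$ forming a $4$-cycle with a pendant edge ($x\cdot y=y\cdot z=z\cdot u=u\cdot x=u\cdot v=t$), obtaining $\asymp |E|^5/q^5$ of them after removing degeneracies. A Cauchy--Schwarz step, pairing two such $5$-tuples along their shared vertices $(y,z,v)$, then yields $\gtrsim |E|^7/q^9$ seven-tuples $(x_1,x_2,x_3,y_{12},y_{13},y_{23},y_{123})$ with all required edges present. Two ideas you are missing finish the argument. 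First, a geometric observation gives the non-edges for free: if $x_1,x_2,x_3$ are distinct, $y_{12}\ne y_{123}$, and $x_3\cdot y_{12}=t$, then the three hyperplanes $\{y:x_i\cdot y=t\}$ would share a common line, forcing $x_2\cdot y_{13}=t$ and contradicting a condition already built into the $5$-tuple; thus no ``line misses $E$'' count is ever needed. Second, the threshold $\tfrac{11}{4}$ enters when bounding the degenerate case $x_1=x_3$ after Cauchy--Schwarz: there $y_{12},y_{23},y_{123}$ are forced onto a single line, giving at most $|E|^2q^3$ such configurations, and (after summing out $y_{13}$) the requirement $5|E|^3q^2\le |E|^7/(800q^9)$ is exactly $|E|\gtrsim q^{11/4}$. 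The remaining witnesses $y_1,y_2,y_3,y^*$ are then found directly from the degree bounds on the pruned set.
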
 

\vskip.125in 

\begin{remark} Since $|{\mathcal H}_t^3(E)|=|E|$, it is clear that the VC-dimension of ${\mathcal H}_t^3(E)$ is at most $\log_2(|E|)$, so $3$ is a clear improvement over this general estimate. It is not difficult to see that the VC-dimension is $<4$ since three points determine a plane in ${\Bbb F}_q^3$, so the real challenge is to establish that some set of 3 points shatters. Moreover, our result says that in this sense, the learning complexity of subsets of ${\Bbb F}_q^3$ of size $>Cq^{\frac{11}{4}}$ is the same as that of the whole vector space ${\Bbb F}_q^3$. \end{remark} 

\vskip.125in 

\begin{remark} In the case when $d=2$ and the dot product $x \cdot y$ is replaced by $||x-y||={(x_1-y_1)}^2+{(x_2-y_2)}^2$, the corresponding result, with the threshold $|E| \ge Cq^{\frac{15}{8}}$ was established by D. Fitzpatrick, E. Wyman and the first two listed authors of this paper (\cite{FIMW21}). The techniques used to prove Theorem \ref{main} are quite a bit different. On one hand, we have more room to roam in three dimensions. On the other, the non-translation invariant nature of the dot product requires special care. \end{remark} 

\vskip.125in 

\begin{remark} As the reader shall see, the proof of Theorem \ref{main} involves a construction of a reasonably complicated point configuration in $E$. For a general theory of such configurations in the context of dot products, see e.g. \cite{HIKR11} and \cite{PV17}. \end{remark} 

\vskip.125in 

\begin{remark} The concept of the VC-dimension plays an important role in many combinatorial problems. See, for example, \cite{AS16}, \cite{HW87}, and the references contained therein. \end{remark} 

\vskip.125in 

We can also prove that the VC-dimension is $\ge 2$ under a much weaker assumption. More precisely, we have the following result. 

\begin{theorem} \label{mainsecond} Let ${\mathcal H}^3_t(E)$ be defined as above with respect to $E \subset {\Bbb F}_q^3$, $t \not=0$. If $|E|>cq^{\frac{5}{2}}$ for an arbitrary $c$, then the VC-dimension of ${\mathcal H}^3_t(E)$ is $\ge 2$. \end{theorem}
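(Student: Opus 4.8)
The plan is to exhibit a two-element set $\{a,b\}\subset E$ that is shattered by $\mathcal{H}^3_t(E)$. Writing $S_a=\#\{y\in E: a\cdot y=t\}$ and $N_{11}(a,b)=\#\{y\in E: a\cdot y=t\text{ and }b\cdot y=t\}$, shattering an independent pair $\{a,b\}$ is equivalent to the positivity of the four counts
$$N_{11},\qquad N_{10}=S_a-N_{11},\qquad N_{01}=S_b-N_{11},\qquad N_{00}=|E|-S_a-S_b+N_{11}.$$
So it suffices to produce a linearly independent pair with $N_{11}(a,b)>0$, with $S_a,S_b>N_{11}$, and with $S_a+S_b-N_{11}<|E|$. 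I would deduce all of these from expected sizes: for a typical pair one anticipates $S_a\approx|E|/q$ (points of $E$ on a plane) and $N_{11}(a,b)\approx|E|/q^2$ (points of $E$ on the line $H_a\cap H_b$, where $H_a=\{z:a\cdot z=t\}$), and once these approximations hold the four inequalities are immediate since $|E|/q^2<|E|/q<|E|$.

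First I would control $S_a$. By the orthogonality relation $\mathbf{1}(u=0)=q^{-1}\sum_{s\in\mathbb{F}_q}\chi(su)$ for a nontrivial additive character $\chi$, one has $S_a=q^{-1}|E|+q^{-1}\sum_{s\neq0}\chi(-st)\widehat{\mathbf{1}_E}(sa)$, where $\widehat{\mathbf{1}_E}(\xi)=\sum_{y\in E}\chi(\xi\cdot y)$. Squaring the error, summing over $a\in E$, applying Cauchy--Schwarz in $s$ together with Plancherel $\sum_\xi|\widehat{\mathbf{1}_E}(\xi)|^2=q^3|E|$, I would obtain $\sum_{a\in E}(S_a-q^{-1}|E|)^2\lesssim q^3|E|$. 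By Chebyshev this forces all but $O(q^5/|E|)$ of the points $a\in E$ to satisfy $S_a=(1+o(1))|E|/q$; since $q^5/|E|\ll|E|$ precisely when $|E|\gg q^{5/2}$, a positive proportion of $a\in E$ are good in this sense. This is the same input as the classical estimate $\#\{(x,y)\in E^2:x\cdot y=t\}=|E|^2/q+O(q^{3/2}|E|)$, whose main term dominates exactly under the hypothesis $|E|>cq^{5/2}$, and this is the source of the exponent $\tfrac52$.

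The hard part is the line count $N_{11}(a,b)$. The same expansion gives $N_{11}(a,b)=q^{-2}|E|+R(a,b)$ with $R(a,b)=q^{-2}\sum_{(s,s')\neq(0,0)}\chi(-t(s+s'))\,\widehat{\mathbf{1}_E}(sa+s'b)$, and I would bound the number of pairs with $N_{11}=0$ by $\big(q^{-2}|E|\big)^{-2}\sum_{a,b\in E}R(a,b)^2$ via Chebyshev. Expanding $\sum_{a,b}R(a,b)^2$ and carrying out the $a,b$ summations first converts the frequency sums into expressions of the shape $\sum_{y,y'\in E}\Phi(y-y')^2$, where $\Phi(v)=q\,\#\{z\in E:z\cdot v=0\}-|E|$ exhibits square-root cancellation for $v\neq0$; the genuinely dangerous contributions come from the degenerate frequencies $sa+s'b=0$, that is, from linearly dependent pairs $(a,b)$ and from the diagonal $y=y'$. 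I expect this to be the main obstacle: isolating those degenerate terms, showing they are of lower order, and verifying that the remaining off-diagonal terms are controlled by Plancherel so that the number of pairs with $N_{11}=0$ stays below $|E|^2$ once $|E|>cq^{5/2}$. Finally, intersecting the good set of pairs for $N_{11}$ with the good sets for $S_a$ and $S_b$ (each missing only a negligible fraction of $E$) leaves at least one independent pair satisfying all four positivity conditions; that pair is shattered, giving VC-dimension $\ge 2$.
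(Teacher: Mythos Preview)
Your strategy---reduce shattering of $\{a,b\}$ to the four positivity conditions, control $S_a$ by a variance bound, then control $N_{11}(a,b)$ by a second moment over pairs---is a legitimate alternative to the paper's argument. The paper proceeds differently: after pruning $E$ so that every vertex has degree $\le C|E|/q$ in the dot-product graph, it counts ordered $5$-paths $(y_1,x_1,y_{12},x_2,y_2)$ using a known path-count estimate, then shows that the ``bad'' $5$-paths (those with $x_1\cdot y_2=t$ or $x_2\cdot y_1=t$) form a lower-order subset by bounding them via the $4$-cycle count in the dot-product graph. A surviving good $5$-path directly furnishes the classifiers $y_{12},y_1,y_2$; the classifier $y^*$ is found by a crude cardinality argument. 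So the paper builds the witness configuration explicitly, whereas you argue statistically that most pairs are good.

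Your $S_a$ step is fine and yields exactly the $q^{5/2}$ threshold. The gap is in the $N_{11}$ step. The assertion that $\Phi(v)=q\,|E\cap v^{\perp}|-|E|$ ``exhibits square-root cancellation for $v\neq 0$'' is not true pointwise: $|E\cap v^{\perp}|$ can be as large as $q^2$ for particular $v$, so $\Phi(v)$ has no uniform bound better than the trivial one. What is actually needed is an upper bound on
\[
\sum_{a,b\in E} N_{11}(a,b)^2
=\#\{(a,b,y,y')\in E^4:\ a\cdot y=a\cdot y'=b\cdot y=b\cdot y'=t\},
\]
which is precisely the $4$-cycle count $C_4^{\mathrm{prod}}(E)$ in the dot-product graph. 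This is the same nontrivial input the paper invokes (Theorem~1.2 of \cite{IJM2021}), and its error term is of size $\asymp q^5|E|^2/q^4=|E|^4/q^4\cdot (q^5/|E|^2)$, comparable to the main term exactly at $|E|\approx q^{5/2}$. Thus your second-moment route does not bypass the hard estimate; it reduces to it. If you replace the unjustified square-root cancellation claim by the $4$-cycle bound, your Paley--Zygmund argument gives that a $1-O(c^{-2})$ fraction of pairs $(a,b)\in E^2$ have $N_{11}(a,b)>0$; intersected with the $1-O(c^{-2})$ fractions having $S_a,S_b\sim|E|/q$, this does produce a shattered pair for $c$ large enough---but note that is ``$c$ sufficiently large'', not the ``arbitrary $c$'' in the statement, and matching the paper's sharper conclusion requires the extra factor of $q$ saved in the paper's comparison of $5$-paths against degenerate $5$-paths.
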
 

\begin{remark} We do not know to what extent the exponent $\frac{11}{4}$ in Theorem \ref{main} and the exponent $\frac{5}{2}$ are sharp, but we know that neither exponent can fall below $2$. \end{remark} 

\vskip.125in 

\section{Learning theory perspective on Theorem \ref{main}} 

\vskip.125in 

From the point of view of learning theory, it is interesting to ask what the "learning task" is in the situation at hand. It can be described as follows. We are asked to construct a function $f:E \to \{0,1\}$, $E \subset {\Bbb F}_q^3$, that is equal to $1$ when $x \cdot y^{*}=t$, but we do not know the value of $y^{*}$. The fundamental theorem of statistical learning tells us that if the VC-dimension of ${\mathcal H}_t^3(E)$ is finite, we can find an arbitrarily accurate hypothesis (element of ${\mathcal H}^3_t(E)$ with arbitrarily high probability if we consider a randomly chosen sampling training set of sufficiently large size. 

We shall now make these concepts precise. Let us recall some more basic notions. 

\begin{definition} Given a set $X$, a probability distribution $D$ and a labeling function $f: X \to \{0,1\}$, let $h$ be a hypothesis, i.e $h:X \to \{0,1\}$, and define 
$$ L_{D,f}(h)={\Bbb P}_{x \sim D}[h(x) \not=f(x)],$$ where ${\Bbb P}_{x \sim D}$ means that $x$ is being sampled according to the probability distribution $D$. 
\end{definition} 

\vskip.125in 

\begin{definition} A hypothesis class ${\mathcal H}$ is PAC learnable if there exist a function 
$$m_{{\mathcal H}}: {(0,1)}^2 \to {\Bbb N}$$ and a learning algorithm with the following property: For every $\epsilon, \delta \in (0,1)$, for every distribution $D$ over $X$, and for every labeling function $f: X \to \{0,1\},$ if the realizability assumption holds with respect to $X$, $D$, $f$, then when running the learning algorithm on $m \ge m_{{\mathcal H}}(\epsilon, \delta)$ i.i.d. examples generated by $D$, and labeled by $f$, the algorithm returns
a hypothesis $h$ such that, with probability of at least $1-\delta$, (over the choice of the examples), 
$$L_{D,f}(h) \leq \epsilon.$$
\end{definition} 

\vskip.125in 

\begin{theorem} Let ${\mathcal H}$ be a collection of hypotheses on a set $X$. Then ${\mathcal H}$ has a finite VC-dimension if and only if ${\mathcal H}$ is PAC learnable. Moreover, if the VC-dimension of ${\mathcal H}$ is equal to $n$, then ${\mathcal H}$ is PAC learnable and there exist constants $C_1, C_2$ such that 
$$ C_1 \frac{n+\log \left(\frac{1}{\delta} \right)}{\epsilon} \leq m_{{\mathcal H}}(\epsilon, \delta) \leq C_2 \frac{n \log \left(\frac{1}{\epsilon} \right)+
\log \left(\frac{1}{\delta} \right)}{\epsilon}.$$

\end{theorem}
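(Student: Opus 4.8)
The plan is to establish the two directions of the stated equivalence separately, extracting the quantitative sample-complexity bounds from the quantitative form of each direction. Write $n=\mathrm{VCdim}(\mathcal H)$. The forward implication — finite VC-dimension implies PAC learnability with the stated upper bound — rests on the \emph{uniform convergence} property: it suffices to show that, with the claimed number of samples, the empirical risk $L_S(h)=\frac1m\sum_i \mathbf 1[h(x_i)\neq f(x_i)]$ is uniformly close to the true risk $L_{D,f}(h)$ across all $h\in\mathcal H$. Indeed, under the realizability assumption the empirical risk minimizer achieves $L_S=0$, so uniform convergence forces its true risk to be small, which is exactly the PAC guarantee. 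The reverse implication, together with the matching lower bound, I would obtain from the \emph{No-Free-Lunch} principle applied to shattered sets.

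To prove uniform convergence quantitatively I would proceed in three steps. First, reduce the supremum over the infinite class $\mathcal H$ to a count of distinct \emph{behaviors} on the sample, encoded by the growth function $\Pi_{\mathcal H}(m)=\max_{|C|=m}\lvert\{h|_C:h\in\mathcal H\}\rvert$. Second, invoke the Sauer--Shelah lemma, which gives $\Pi_{\mathcal H}(m)\leq\sum_{i=0}^{n}\binom{m}{i}\leq\left(\tfrac{em}{n}\right)^n$ once $m\geq n$; this is where finiteness of the VC-dimension enters, turning an exponential count into a polynomial one. Third, apply the symmetrization (``ghost sample'') argument to bound $\mathbb P\big[\sup_h \lvert L_S(h)-L_{D,f}(h)\rvert>\epsilon\big]$ by twice the probability of a deviation between two independent samples, after which only the behaviors on the doubled sample matter; a union bound over $\Pi_{\mathcal H}(2m)$ behaviors, each controlled by a Chernoff tail, yields a bound of the shape $\Pi_{\mathcal H}(2m)\,e^{-cm\epsilon}$. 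Setting this below $\delta$ and substituting Sauer--Shelah produces $m_{\mathcal H}(\epsilon,\delta)\leq C_2\frac{n\log(1/\epsilon)+\log(1/\delta)}{\epsilon}$; crucially, the sharper $1/\epsilon$ rate (rather than $1/\epsilon^2$) comes from replacing the two-sided Hoeffding bound by the one-sided multiplicative (relative-deviation) Chernoff bound, which is available precisely because the minimizer has zero empirical error in the realizable setting.

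For the reverse implication and the matching lower bound I would argue quantitative hardness. If $\mathcal H$ shatters a set $C$ of size $n$, place a distribution on $C$ that puts a small $\epsilon$-fraction of its mass on points a learner is unlikely to observe; since $C$ is shattered, the labels on the unobserved points are information-theoretically undetermined, so any learner seeing $m\lesssim n/\epsilon$ samples must misclassify an $\epsilon$-fraction with constant probability, forcing $m_{\mathcal H}(\epsilon,\delta)\geq C_1\frac{n}{\epsilon}$. The additive $\frac{\log(1/\delta)}{\epsilon}$ term arises from a separate two-point construction — two hypotheses differing on a single atom of mass $\epsilon$, whose distinction with confidence $1-\delta$ requires $\gtrsim\frac{\log(1/\delta)}{\epsilon}$ samples. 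Taking $n\to\infty$ in the first construction shows that infinite VC-dimension precludes any finite $m_{\mathcal H}$, closing the equivalence.

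The main obstacle I expect is the symmetrization step and the ensuing union bound: making rigorous the passage from the continuous supremum to the finite growth function requires the ghost sample together with a random-swap (permutation) argument, and the constants in the final estimate are sensitive to whether one uses the additive Hoeffding inequality or the multiplicative Chernoff bound. Matching the advertised upper bound — with the $\log(1/\epsilon)$ factor and the $1/\epsilon$ rather than $1/\epsilon^2$ dependence — hinges specifically on the realizability assumption to license the relative-deviation bound; obtaining the clean form stated here, and checking that the lower-bound constructions genuinely saturate it, is the technically delicate part.
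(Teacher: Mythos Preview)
The paper does not prove this theorem at all: it is quoted without proof as background from the learning-theory literature (the reference is \cite{DS14}, Shalev-Shwartz and Ben-David), so there is no argument in the paper to compare your proposal against. Your sketch is the standard textbook route --- Sauer--Shelah plus symmetrization and a relative Chernoff bound for the upper bound, a shattered-set No-Free-Lunch construction for the lower bound --- and is correct in outline; but for the purposes of this paper the theorem is simply cited, not proved.
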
 

\vskip.125in 

Going back to the learning task associated with ${\mathcal H}_t^3(E)$, as in Theorem \ref{main}, suppose that $h_y$ is a "wrong" hypothesis, i.e 
$y \not=y^{*}$, where $f=h_{y^{*}}$ is the true labeling function. 

Since the size of a plane in ${\Bbb F}_q^3$ is $q^2$, and $D$ is the uniform probability distribution on 
${\Bbb F}_q^3$, 
$$ L_{D,f}(h) \leq \frac{1}{q} \left(1+o(1) \right),$$ so one must choose $\epsilon$ just slightly less than $\frac{1}{q}$ to make the results meaningful. It follows by taking 
$\delta=\epsilon$ that we need to consider random samples of size $\approx C q \log(q)$ with sufficiently large $C$ to execute the desired algorithm. Moreover, since $3$ points determine a plane in ${\Bbb F}_q^3$ effectively means that if $\epsilon$ is just slightly less than $\frac{1}{q}$, then $L_{D,f}(h)=0$. 

\vskip.25in 

\section{Proof of Theorem \ref{mainsecond}} 

\vskip.125in 

We prove Theorem \ref{mainsecond} first because some of the ideas in the proof will be needed in the proof of Theorem \ref{main}. It is sufficient to prove that there exist $x_1,x_2, y_1, y_2, y_{12}, y^{*} \in E$ such that \begin{itemize} 
\item i) $x_1 \cdot y_{12}=x_2 \cdot y_{12}=t$, \item ii) $x_1 \cdot y_1=t, x_2 \cdot y_1 \not=t$, \item iii) $x_2 \cdot y_2=t, x_1 \cdot y_2 \not=t$, \item iv) $x_1 \cdot y^{*}, x_2 \cdot y^{*} \not=t$ \end{itemize} 

It suffices to find such a tuple $(x_1,x_2,y_{12},y_1,y_2)$ under the additional assumption that for each $u\in E$, there are at most $C\frac{|E|}{q}$ vectors $v\in E$ such that $u\cdot v = t$.  The following lemma allows us to reduce to this case.

\begin{lemma}\label{prune}
Let $E\subseteq \mathbb{F}_q^3$ be a set satisfying the hypotheses of Theorem \ref{main}.  Then there is a subset $E'\subseteq E$ with $|E'|\geq \frac{1}{2}|E|$, and for any $u\in E'$, 
$$
\sum_{v\in E'}{D_t(u,v)}\leq \frac{22|E'|}{5q}
$$
\end{lemma}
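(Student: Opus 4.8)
The plan is to control the whole problem through a single count, the number of ``dot product equals $t$'' incidences
$$
\nu(t) := \left|\{(u,v) \in E \times E : u \cdot v = t\}\right| = \sum_{u \in E}\sum_{v \in E} D_t(u,v),
$$
where $D_t(u,v) = 1$ if $u \cdot v = t$ and $0$ otherwise. I would first show that $\nu(t)$ is extremely close to its expected value $|E|^2/q$, and then obtain $E'$ by a Markov-type pruning that discards exactly the vertices of abnormally high degree in this incidence graph. The degree bound for the surviving set will then come out with the constant $22/5$ essentially for free.

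For the incidence count I would expand $D_t(u,v)$ with a nontrivial additive character $\chi$ of $\mathbb{F}_q$, writing $D_t(u,v) = q^{-1}\sum_{s \in \mathbb{F}_q}\chi\big(s(u\cdot v - t)\big)$. The term $s=0$ produces the main term $|E|^2/q$, while the remaining terms form an error $q^{-1}\sum_{s \ne 0}\chi(-st)\sum_{u\in E}\widehat{E}(-su)$, where $\widehat{E}(\xi)=\sum_{v\in E}\chi(-v\cdot\xi)$ is the (unnormalized) Fourier transform of the indicator of $E$. Applying Cauchy--Schwarz in $u$ and then Plancherel, $\sum_{\xi}|\widehat{E}(\xi)|^2 = q^3|E|$, bounds each inner sum by $q^{3/2}|E|$, so
$$
\left|\nu(t) - \frac{|E|^2}{q}\right| \le q^{3/2}|E|.
$$
The relative error is thus at most $q^{5/2}/|E|$, which the hypothesis $|E| \ge Cq^{11/4}$ (in fact any $|E| \gg q^{5/2}$, the threshold of Theorem \ref{mainsecond}) drives to $o(1)$; taking $C$ large enough I may assume $\nu(t) \le \tfrac{11}{10}\cdot \tfrac{|E|^2}{q}$.

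Now I would prune. Writing $d(u) = \sum_{v\in E}D_t(u,v)$ and fixing the threshold $\lambda = 2\nu(t)/|E|$, Markov's inequality gives $|\{u\in E : d(u) > \lambda\}| \le \nu(t)/\lambda = |E|/2$, so $E' := \{u \in E : d(u) \le \lambda\}$ satisfies $|E'| \ge \tfrac12|E|$. Since deleting vertices only lowers degrees, every $u\in E'$ has $\sum_{v\in E'}D_t(u,v) \le d(u) \le \lambda$, and using $\nu(t) \le \tfrac{11}{10}|E|^2/q$ together with $|E| \le 2|E'|$,
$$
\lambda = \frac{2\nu(t)}{|E|} \le \frac{11}{5}\cdot\frac{|E|}{q} \le \frac{11}{5}\cdot\frac{2|E'|}{q} = \frac{22}{5}\cdot\frac{|E'|}{q},
$$
which is exactly the asserted bound, and explains the appearance of the constant $22/5$.

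Every step here is individually routine, so the only genuine difficulty is quantitative bookkeeping: the Plancherel/Gauss-sum estimate must be sharp enough that the relative error $q^{5/2}/|E|$ can be pushed below $1/10$, and the two independent factors of $2$ (one from allowing half of $E$ to be thrown away, one from $|E| \le 2|E'|$) must be absorbed into the slack between $4$ and $22/5 = 4.4$. I expect the estimate on $\nu(t)$ to be the technical heart of the argument, since everything downstream is an elementary Markov inequality.
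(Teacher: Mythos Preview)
Your proposal is correct and follows essentially the same route as the paper: establish that $\nu(t)=\sum_{u,v\in E}D_t(u,v)\le \tfrac{11}{10}\,|E|^2/q$, then prune by Markov's inequality at the threshold $\tfrac{11|E|}{5q}$ and use $|E|\le 2|E'|$ to convert to the stated bound $\tfrac{22|E'|}{5q}$. The only cosmetic difference is that the paper quotes the sharper incidence bound $|R(t)|\le q|E|$ from \cite{IJM2021}, whereas your self-contained Fourier/Cauchy--Schwarz argument yields the weaker $|R(t)|\le q^{3/2}|E|$; since the hypothesis $|E|\ge Cq^{11/4}$ (indeed already $|E|\gg q^{5/2}$) makes either error term $\le \tfrac{1}{10}|E|^2/q$, this has no effect on the conclusion.
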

Clearly if $\mathcal{H}_t^2(E')$ shatters some set of 3 points, then $\mathcal{H}_t^2(E)$ shatters the same set of 3 points.  Moreover, if $E$ satisfies the hypotheses of Theorem \ref{main} or Theorem \ref{mainsecond}, then so does $E'$.

\begin{proof}

It follows immediately from Theorem 2.3 in \cite{IJM2021} that
$$\sum_{u,v \in E}{D_t(u,v)} = \frac{|E|^2}{q} + R(t),$$
where $|R(t)| \le |E|q < \frac{|E|^2}{10q}$. In particular,
$$\sum_{u \in E}\sum_{v \in E}{D_t(u,v)} \le \frac{11|E|^2}{10q}.$$
This implies that at most $\frac{|E|}{2}$ distinct points $u\in E$ satisfy
$$
\sum_{v \in E}{D_t(u,v)} \ge \frac{11|E|}{5q}.
$$
Thus, for
$$
E':=\left\{u\in E: \sum_{v\in E}{D_t(u,v)}\leq \frac{11|E|}{5q}\right\},
$$
we see that $E'$ satisfies the conditions of the lemma.
\end{proof}
With this lemma, we may assume without loss of generality that there are at most $C\frac{|E|}{q}$ vectors $v\in E$ with $u\cdot v=t$.  By Theorem 2.2 in \cite{IJM2021}, there exist a set $P_5$ of ordered quintuples $(x_1,x_2,y_{12}, y_1,y_2)$ such that 
$$y_1 \cdot x_1=x_1 \cdot y_{12}=y_{12} \cdot x_2=x_2 \cdot y_2=t,$$ and 
$$\left||P_5| - \frac{|E|^5}{q^4}\right| \le \frac{4}{\log 2}q^2 \frac{|E|^4}{q^4} \le \frac{1}{2}\frac{|E|^5}{q^4}.$$

In particular, $|P_5| \ge \frac{1}{2}\frac{|E|^5}{q^4}$. Such a quintuple is represented in Figure \ref{mainsecond_mainfig} as a graph with the vectors as vertices and edges between them if their dot product is $t$.

\begin{figure}[h!]
	{\includegraphics[scale=0.15]{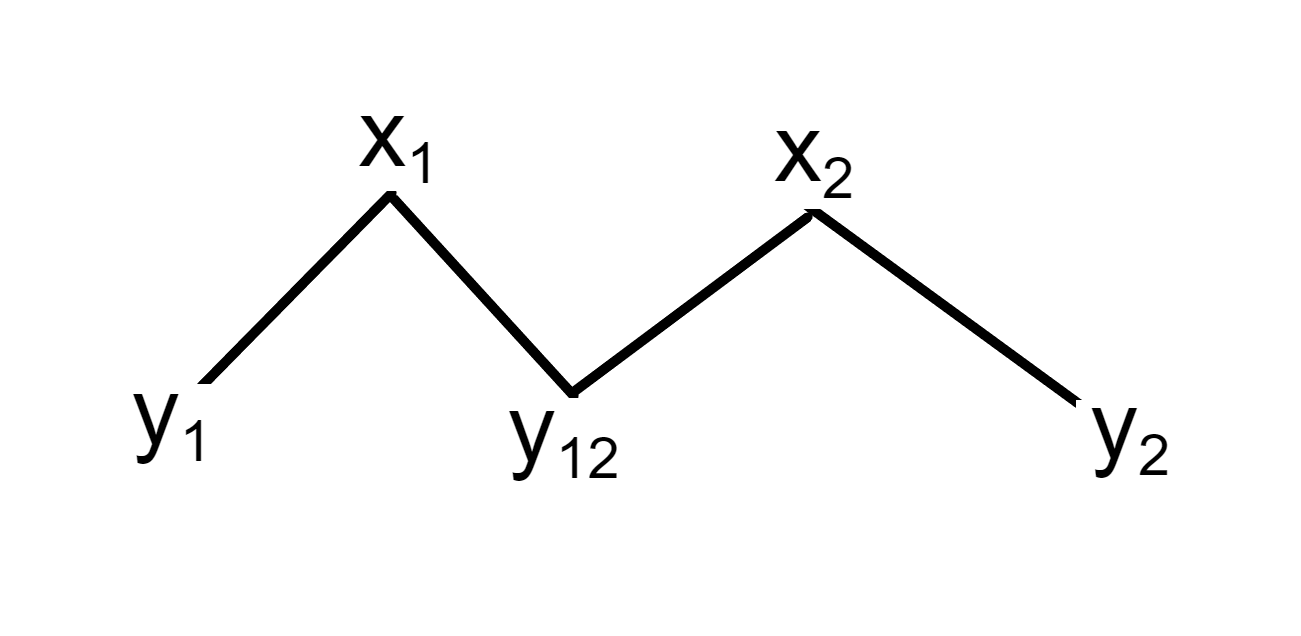} \caption{Configuration for Theorem \ref{mainsecond}} \label{mainsecond_mainfig}} 
\end{figure}

It remains to show that such a quintuple exists with $x_1 \cdot y_2 \not=t$ and $x_2 \cdot y_1 \not=t$. We first count the number of quintuples $(x_1,x_2,y_{12}, y_1,y_2)$ in $P_5$ with $x_1 \cdot y_2 =t$. This case of degeneracy is displayed in Figure \ref{mainsecond_degen_case} below. 

\begin{figure}[h!]
	\includegraphics[scale=0.3]{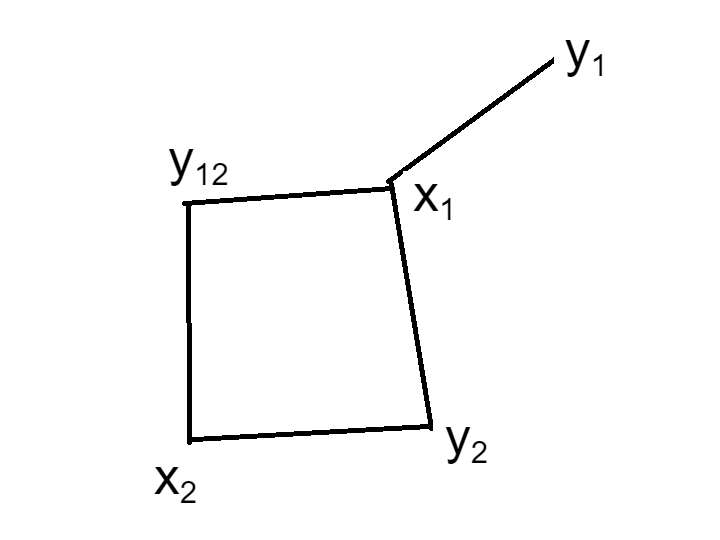}
	\caption{Degeneracy case in which $x_2 \cdot y_1 = t$}
	\label{mainsecond_degen_case}
\end{figure}

We have, 
\begin{align*}
	\sum_{x_1,x_2,y_{12},y_1,y_2 \in E}&{ D_t(y_1,x_1)D_t(x_1,y_{12})D_t(y_{12},x_2)D_t(x_2,y_2)D_t(y_2,x_1)} \\
	& = \sum_{x_1,x_2,y_{12},y_2 \in E}		{D_t(x_1,y_{12})D_t(y_{12},x_2)D_t(x_2,y_2)D_t(y_2,x_1)} \sum_{y_1 \in E}{D_t(y_1,x_1)} \\
	& \le \frac{22|E|}{5q} \sum_{x_1,x_2,y_{12},y_2 \in E}		{D_t(y_1,x_1)D_t(x_1,y_{12})D_t(y_{12},x_2)D_t(x_2,y_2)}	
\end{align*}
This sum over $x_1,x_2,y_{12},y_2$ is the number of 4-cycles in the dot-product graph on $E$, denoted $C_4^{prod}$ in the notation of \cite{IJM2021}. By Theorem 1.2 in \cite{IJM2021}, 
$$
	\left| \sum_{x_1,x_2,y_{12},y_2 \in E}{D_t(y_1,x_1)D_t(x_1,y_{12})D_t(y_{12},x_2)D_t(x_2,y_2)} - \frac{|E|^4}{q^4} \right| 
$$
$$
	\le \frac{|E|^4}{q^4} \left( 12q^{-\frac{1}{2}} + 8\frac{q^5}{|E|^2} + 28\frac{q^2}{|E|} \right) 
$$
$$
	\le \frac{|E|^4}{q^4} \left( 12q^{-\frac{1}{2}} + \frac{8}{c^2} + \frac{28}{c}q^{-\frac{1}{2}} \right)
$$
$$
	\le \frac{9|E|^4}{c^2q^4}
$$
Thus, 
$$\sum_{x_1,x_2,y_{12},y_2 \in E}{D_t(y_1,x_1)D_t(x_1,y_{12})D_t(y_{12},x_2)D_t(x_2,y_2)} \le \left( \frac{9}{c^2}+1 \right) 
\frac{|E|^4}{q^4}$$
and
\begin{align*}
	\sum_{x_1,x_2,y_{12},y_1,y_2 \in E}&{ D_t(y_1,x_1)D_t(x_1,y_{12})D_t(y_{12},x_2)D_t(x_2,y_2)D_t(y_2,x_1)} \\
	&\le \frac{22|E|}{5q} \left( \frac{9}{c^2}+1 \right) \frac{|E|^4}{q^4} \\
	&= \frac{22}{5}\left( \frac{9}{c^2}+1 \right) \frac{|E|^5}{q^5} < \frac{|E|^5}{10q^4}.
\end{align*}
That is, the number of quintuples $(x_1,x_2,y_{12}, y_1,y_2)$ in $P_5$ with $x_1 \cdot y_2 =t$ is less than $\frac{|E|^5}{10q^4}$. Analogously, the number of quintuples $(x_1,x_2,y_{12}, y_1,y_2)$ in $P_5$ with $x_2 \cdot y_1 =t$ is less than $\frac{|E|^5}{10q^4}$. It follows that there exists a quintuple $(x_1,x_2,y_{12}, y_1,y_2)$ in $P_5$ with $x_1 \cdot y_2, x_2 \cdot, y_1 \neq t$.

It only remains to construct $y^{*} \in E$ such that $x_1 \cdot y^{*} \not=t$ and $x_2 \cdot y^{*} \not=t$. Observe that 
$$ |\{x \in E: x \cdot x_1=t\}|=q^2,$$ and 
$$ |\{x \in E: x \cdot x_2=t\}|=q^2,$$ so since $|E|>2q^2$, there exists $y^{*}$ with the desired properties. This completes the proof of Theorem \ref{mainsecond}. 

\section{Proof of Theorem \ref{main}} 

As in the previous section, we may assume without loss of generality that for all $u \in E$,
$$
\sum_{v \in E}{D_t(u,v)} \le \frac{22|E|}{5q}.
$$
This time we will reduce to the case where the sum is bounded below as well, which follows analogously via a counterpart to Lemma \ref{prune}.
\begin{lemma}\label{prune2}
For a set $E$ satisfying the hypotheses of Theorem \ref{main}, there is a subset $E_0\subseteq E$ with $|E_0|\geq \frac{1}{6}|E|$, and for any $u\in E_0$, 
$$
\sum_{v\in E}{D_t(u,v)}\geq \frac{|E|}{5q}
$$
\end{lemma}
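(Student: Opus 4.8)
The plan is to run a first-moment argument dual to the one in Lemma \ref{prune}: there we discarded the points of unusually large degree, and here we discard the points of unusually small degree. The leverage comes from the fact that the total number of incidences $\sum_{u,v\in E}D_t(u,v)$ is pinned down tightly enough that only a bounded fraction of $E$ can have degree below $\frac{|E|}{5q}$. Write $d(u)=\sum_{v\in E}D_t(u,v)$ for the degree of $u$ in the dot-product graph, set $E_0=\{u\in E: d(u)\ge \frac{|E|}{5q}\}$ and $B=E\setminus E_0$, and recall that by the reduction already carried out we may assume $d(u)\le \frac{22|E|}{5q}$ for every $u\in E$.

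First I would bound the total degree from below. By Theorem 2.3 in \cite{IJM2021}, $\sum_{u,v\in E}D_t(u,v)=\frac{|E|^2}{q}+R(t)$ with $|R(t)|\le |E|q$; since the hypothesis $|E|\ge Cq^{\frac{11}{4}}$ forces $|E|\ge 10q^2$ for $C$ large, the error term satisfies $|E|q\le \frac{1}{10}\frac{|E|^2}{q}$, so $\sum_{u\in E}d(u)\ge \frac{9}{10}\frac{|E|^2}{q}$. Next I would bound the same quantity from above by splitting across $B$ and $E_0$: every $u\in B$ contributes less than $\frac{|E|}{5q}$ and every $u\in E_0$ at most $\frac{22|E|}{5q}$, so writing $|B|=|E|-|E_0|$,
\[
\sum_{u\in E}d(u) < (|E|-|E_0|)\frac{|E|}{5q} + |E_0|\frac{22|E|}{5q} = \frac{|E|^2}{5q} + |E_0|\frac{21|E|}{5q}.
\]
Combining the two bounds and solving for $|E_0|$ yields $|E_0|\frac{21|E|}{5q} > \frac{7}{10}\frac{|E|^2}{q}$, that is $|E_0| > \frac{1}{6}|E|$ (the case $B=\emptyset$ being trivial since then $E_0=E$).

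The arithmetic is routine; the content lies in having both ingredients available at once. I expect the main point to be the necessity of the uniform upper bound $d(u)\le \frac{22|E|}{5q}$: without it, a handful of points of enormous degree could account for essentially all of the $\frac{|E|^2}{q}$ incidences, leaving the vast majority of $E$ with negligible degree, and no lower bound of the desired type would hold. This is precisely why the upper-degree reduction is performed before stating Lemma \ref{prune2}. The only other thing to verify is that the error term $R(t)$ in the incidence count is genuinely negligible against $\frac{|E|^2}{q}$, which is where the hypothesis $|E|\ge Cq^{\frac{11}{4}}$ (far larger than $q^2$) enters. It is worth noting that the constants $\frac{22}{5}$ and $\frac{9}{10}$ are calibrated so that the target fraction $\frac{1}{6}$ emerges exactly.
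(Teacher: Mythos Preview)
Your proof is correct and essentially identical to the paper's own argument: define $E_0$ as the set of large-degree points, bound the total incidence count below by $\frac{9|E|^2}{10q}$ via the error estimate from \cite{IJM2021}, bound it above by splitting over $E_0$ and its complement using the already-established cap $d(u)\le\frac{22|E|}{5q}$, and solve for $|E_0|$. The only addition is your commentary on why the prior upper-degree pruning is essential, which is a helpful observation but not part of the paper's proof.
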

\begin{proof}
Let 
$$
E_0:=\left\{u\in E: \sum_{v\in E}{D_t(u,v)}\geq \frac{|E|}{5q}\right\},
$$
so that we need only show that $|E_0|\geq \frac{1}{6}|E|$.
\begin{align*}
	\sum_{u,v \in E}{D_t(u,v)} &= \sum_{u \in E_0}\sum_{v \in E}{D_t(u,v)} + \sum_{u \not\in E_0}\sum_{v \in E}{D_t(u,v)} \\
	&\le |E_0|\frac{22|E|}{5q} + (|E|-|E_0|)\frac{|E|}{5q} \\
	&= \frac{|E|^2}{5q} + |E_0|\frac{21|E|}{5q}.
\end{align*}
We know from the previous section that for $E$ satisfying the hypotheses of Theorem \ref{main},
$$
\left|\sum_{u,v\in E}{D_t(u,v)}-\frac{|E|^2}{q}\right|<\frac{|E|^2}{10q}.
$$
Thus,
$$\frac{9|E|^2}{10q} \le \sum_{u,v \in E}{D_t(u,v)} \le \frac{|E|^2}{5q} + |E_0|\frac{21|E|}{5q},$$
and so
$$|E_0| \ge \frac{|E|}{6}.$$
\end{proof}
With Lemma \ref{prune} and Lemma \ref{prune2}, we may assume that for all $u\in E$, 
$$
\frac{|E|}{5q}\leq \sum_{v\in E}{D_t(u,v)}\leq \frac{22|E|}{5q}
$$
In order to conclude that $\mathcal{H}_t^3(E)$ has VC-dimension 3, we need to find a set $\{x_1,x_2,x_3\}$ of 3 distinct points which is shattered by $\mathcal{H}_t^3(E)$.  This is equivalent to finding $x_1, x_2, x_3, y_1, y_2, y_3, y_{12}, y_{13}, y_{23}, y_{123}, y^* \in E$ with $x_1, x_2, x_3$ distinct such that
\begin{itemize}
\item $x_1 \cdot y_{123} = x_2 \cdot y_{123} = x_3 \cdot y_{123} = t$\
\item $x_1 \cdot y_{12} = x_2 \cdot y_{12} = t$, $x_3 \cdot y_{12} \neq t$, and similarly for $y_{13}$ and $y_{23}$
\item $x_1 \cdot y_1 = t$, $x_2 \cdot y_1, x_3 \cdot y_1 \neq t$, and similarly for $y_1$ and $y_2$
\item $x_1 \cdot y^*, x_2 \cdot y^*, x_3 \cdot y^* \neq t$
\end{itemize}
This configuration is displayed below in Figure \ref{main_whole_config}.

\begin{figure}[h!]
	\includegraphics[scale=0.23]{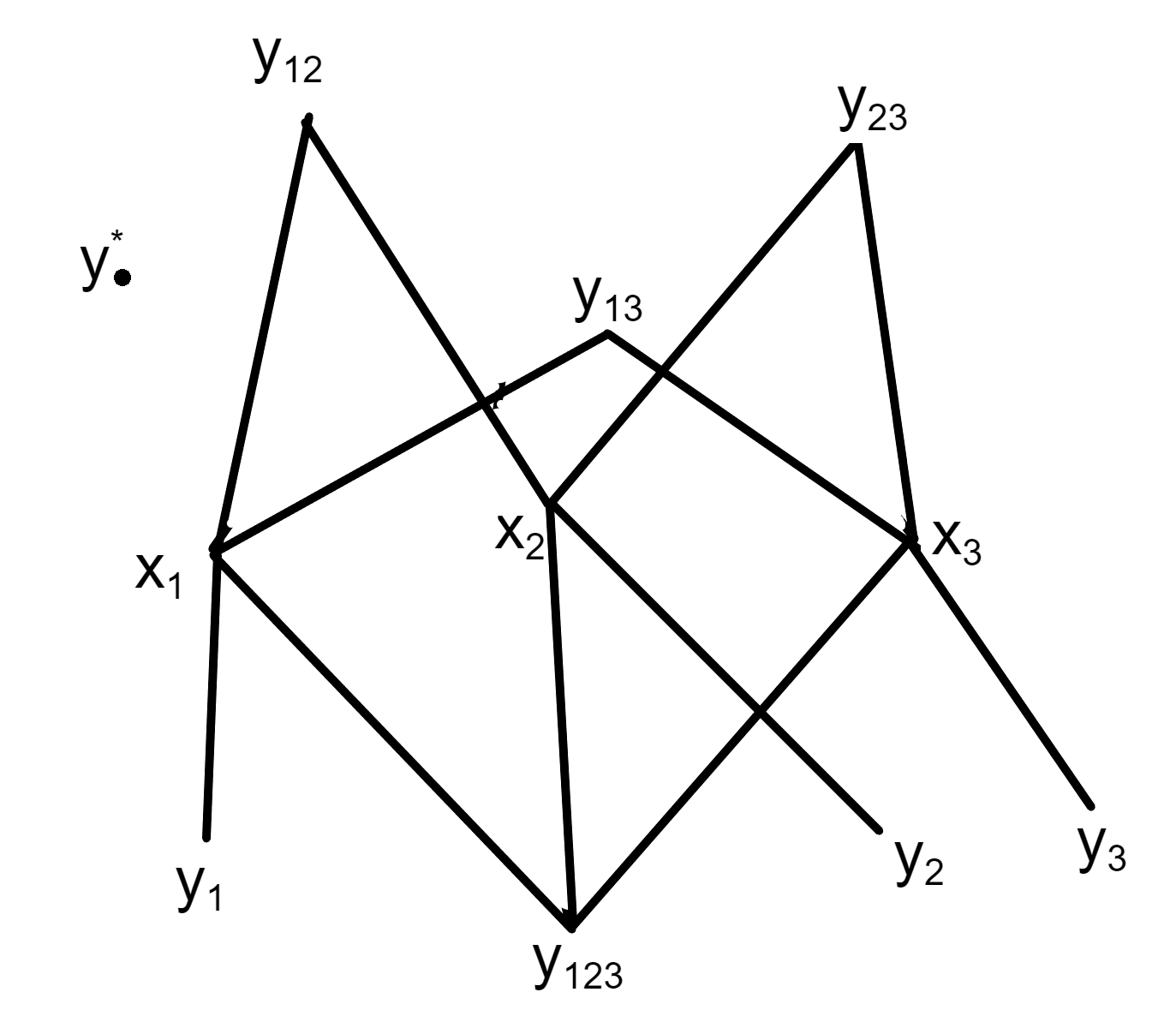}
	\caption{Configuration for shattering a set of three points}
	\label{main_whole_config}
\end{figure}
Let 
\begin{align}\label{setA}
A=\left\{(x,y,z,u,v)\in E^5: x\cdot y = y\cdot z = z\cdot u = u\cdot x = v\cdot u=t\right\}
\end{align}
\vspace{0.1 cm}

For $(x,y,z,u,v)\in A$, by identifying $x=y_{12}$, $y=x_2$, $z=y_{123}$, $u=x_1$, and $v=y_{13}$, this configuration corresponds to the graph shown in Figure \ref{main_halfconfig} below, which is a subgraph of the graph shown in Figure \ref{main_whole_config}.  Our strategy is to use the symmetry of the larger configuration, in the sense that by removing $y_1,y_2,y_3$ and then identifying $x_1=x_3$ and $y_{12}=y_{23}$, we obtain the smaller configuration.  

\begin{figure}[h!]
	\includegraphics[scale=0.2]{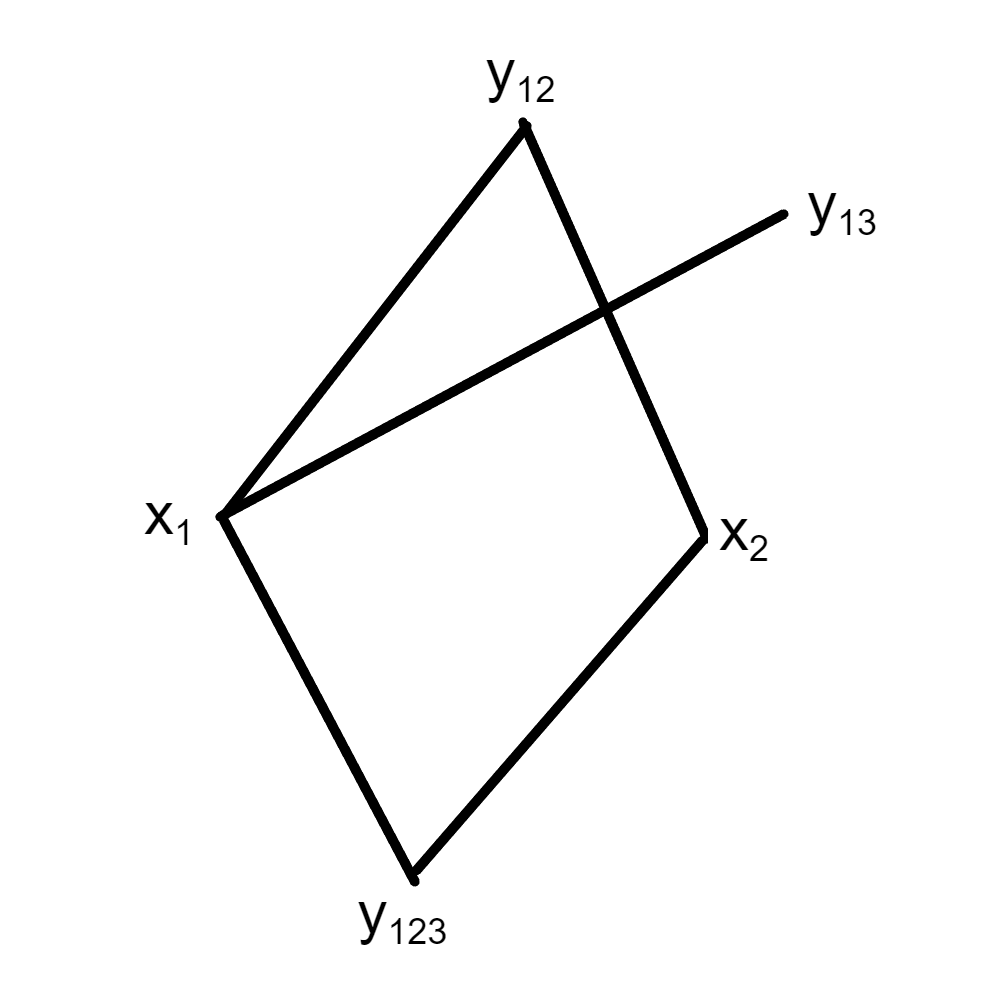}
	\caption{Initial configuration used to build up to full shattering configuration}
	\label{main_halfconfig}
\end{figure}

\vskip.125in 

We need the following result follows from \cite{J2022}, Chapter 2). 
\begin{lemma} Let $E \subset {\Bbb F}_q^3$ with $|E| \ge Cq^{\frac{5}{2}}$, $C$ sufficiently large. Then for $A$ as in equation \ref{setA},
$$ \frac{1}{2} {|E|}^5 q^{-5} \leq |A| \leq 2 {|E|}^5 q^{-5}.$$
\end{lemma}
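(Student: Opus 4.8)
The plan is to peel off the pendant vertex $v$ and reduce the count to the four--cycle sum that already appears in the proof of Theorem~\ref{mainsecond}. Since $D_t(a,b)=1$ exactly when $a\cdot b=t$ and $v$ enters only through the constraint $v\cdot u=t$, summing over $v$ first gives
\begin{equation*}
|A|=\sum_{x,y,z,u\in E}D_t(x,y)D_t(y,z)D_t(z,u)D_t(u,x)\,\lambda(u),\qquad \lambda(u):=\sum_{v\in E}D_t(u,v),
\end{equation*}
so that $|A|$ is the four--cycle count $C_4^{prod}$ with each cycle weighted by the degree $\lambda(u)$ of its distinguished vertex $u$. Expanding $D_t(u,v)=q^{-1}\sum_{s}\chi\bigl(s(u\cdot v-t)\bigr)$ in a nontrivial additive character $\chi$, the $s=0$ contribution isolates the expected degree $|E|/q$; writing $\Delta(u):=\lambda(u)-\tfrac{|E|}{q}=q^{-1}\sum_{s\ne0}\chi(-st)\,\widehat{1_E}(su)$, with $\widehat{1_E}(\xi)=\sum_{w\in E}\chi(w\cdot\xi)$, one obtains
\begin{equation*}
|A|=\frac{|E|}{q}\,C_4^{prod}+\sum_{u\in E}\Delta(u)\,n_4(u),\qquad n_4(u):=\sum_{x,y,z\in E}D_t(x,y)D_t(y,z)D_t(z,u)D_t(u,x).
\end{equation*}

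For the main term I would quote the four--cycle estimate (Theorem~1.2 of \cite{IJM2021}) verbatim from the proof of Theorem~\ref{mainsecond}: for $|E|\ge Cq^{5/2}$ one has $C_4^{prod}=\frac{|E|^4}{q^4}\bigl(1+O(C^{-1})\bigr)$, so $\frac{|E|}{q}C_4^{prod}=\frac{|E|^5}{q^5}\bigl(1+O(C^{-1})\bigr)$, which already lies inside $[\tfrac12,2]\frac{|E|^5}{q^5}$ once $C$ is large.

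Everything then rests on showing that the error $\sum_{u\in E}\Delta(u)n_4(u)$ is a small fraction of $\frac{|E|^5}{q^5}$, which is the heart of the matter. No pointwise bound on $\Delta(u)$ suffices --- the pruning Lemmas~\ref{prune} and~\ref{prune2} only give $|\Delta(u)|\lesssim|E|/q$, which is the very size of the weight in the main term --- so cancellation must be extracted on average. I would apply Cauchy--Schwarz,
\begin{equation*}
\Bigl|\sum_{u\in E}\Delta(u)\,n_4(u)\Bigr|\le\Bigl(\sum_{u\in E}\Delta(u)^2\Bigr)^{1/2}\Bigl(\sum_{u\in E}n_4(u)^2\Bigr)^{1/2}.
\end{equation*}
The first factor I would control by extending the sum to all of ${\Bbb F}_q^3$ (legitimate as $\Delta(u)^2\ge0$) and expanding: since for each fixed $s\ne0$ the map $u\mapsto su$ is a bijection of ${\Bbb F}_q^3$ and $\sum_{\xi}|\widehat{1_E}(\xi)|^2=q^3|E|$ by Plancherel, one gets $\sum_{u\in E}\Delta(u)^2\le\sum_{u\in{\Bbb F}_q^3}\Delta(u)^2\le q^3|E|$. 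The second factor counts two four--cycles glued at the common vertex $u$ (seven vertices, eight edges) and should be $O(|E|^7/q^8)$ for $|E|\ge Cq^{5/2}$. Combining,
\begin{equation*}
\Bigl|\sum_{u\in E}\Delta(u)\,n_4(u)\Bigr|\le\bigl(q^3|E|\bigr)^{1/2}\bigl(|E|^7q^{-8}\bigr)^{1/2}=|E|^4q^{-5/2}=\frac{q^{5/2}}{|E|}\cdot\frac{|E|^5}{q^5}\le C^{-1}\,\frac{|E|^5}{q^5},
\end{equation*}
using $|E|\ge Cq^{5/2}$ in the last step, which together with the main term gives the two-sided bound.

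The main obstacle is thus the uniform control of the second moment $\sum_{u\in E}n_4(u)^2$ --- a seven--vertex configuration count --- to within a constant of its main term $|E|^7/q^8$, and the verification that this count, like the variance above, closes at the exponent $5/2$ rather than demanding something larger; this is the one input I would expect to require genuine work rather than the bijection-plus-Plancherel device used for $\Delta$. Both moments are instances of the general dot--product configuration estimates of \cite{IJM2021} and \cite{J2022}, and the pleasant point is that the single threshold $q^{5/2}$ appears to govern all of them, which is why the lemma needs only $q^{5/2}$ and not the $q^{11/4}$ required for the full shattering construction in Theorem~\ref{main}. A more systematic alternative, which I expect underlies the cited treatment in \cite{J2022}, is to forgo the reduction and expand all five constraints into characters at once, isolate the diagonal frequency $(s_1,\dots,s_5)=0$ that produces $\frac{|E|^5}{q^5}$, and bound the remaining terms grouped by the support of $(s_1,\dots,s_5)$ using the same bijection and Plancherel estimates; there the delicacy migrates into the character--sum bookkeeping, but the arithmetic again balances precisely at $q^{5/2}$.
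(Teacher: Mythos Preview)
The paper does not prove this lemma at all; it simply cites Chapter~2 of \cite{J2022}. So there is no in-paper argument to compare against, only the black-box reference. That said, two comments on your sketch are worth making.

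First, your dismissal of the pointwise route is the real misstep. You write that the pruning lemmas ``only give $|\Delta(u)|\lesssim |E|/q$, which is the very size of the weight in the main term,'' and conclude that cancellation must be extracted on average. But the goal is a two-sided bound on $|A|$, not smallness of an error term. After pruning one has $\tfrac{|E|}{5q}\le \lambda(u)\le \tfrac{22|E|}{5q}$ \emph{pointwise}, and your own identity $|A|=\sum_{u}\lambda(u)\,n_4(u)$ then gives
\[
\tfrac{|E|}{5q}\,C_4^{prod}\ \le\ |A|\ \le\ \tfrac{22|E|}{5q}\,C_4^{prod},
\]
which together with $C_4^{prod}=(1+O(C^{-1}))|E|^4/q^4$ already yields $|A|\asymp |E|^5/q^5$ at the $q^{5/2}$ threshold, with no character sums and no second moments. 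The constants are $[1/5,22/5]$ rather than $[1/2,2]$, but the downstream argument only needs $|A|\ge c|E|^5/q^5$ for some fixed $c>0$. You conflated ``bounding $\Delta(u)$'' with ``bounding $\lambda(u)$''; the former is indeed useless here, but the latter is exactly what the pruning supplies.

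Second, the Cauchy--Schwarz route you propose is not a reduction but an escalation: you trade the five-vertex, five-edge count $|A|$ for the seven-vertex, eight-edge count $\sum_u n_4(u)^2$ (two $4$-cycles glued at a vertex). Establishing $\sum_u n_4(u)^2=O(|E|^7/q^8)$ at $|E|\sim q^{5/2}$ is a strictly harder instance of the same dot-product configuration problem, so the argument is circular --- you would end up invoking the very machinery of \cite{IJM2021}, \cite{J2022} that proves the lemma directly. Your closing remark about expanding all five constraints into characters and handling the off-diagonal frequencies by Plancherel is the correct picture of what the cited source actually does; that is the genuine proof, and your Cauchy--Schwarz detour neither avoids it nor simplifies it.
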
 
Since the definition of the set $A$ did not require that $x\neq z$, $y\neq u$, and $y\cdot v\neq t$, all of which will be necessary for our construction, we will find an upper bound for the number of elements of $A$ which do not have these properties.  The following lemma implies that these make up a small proportion of $A$.

\begin{lemma} \label{degeneracy_case}
Let $E \subset {\Bbb F}_q^3$ with $|E| \ge Cq^{\frac{5}{2}}$, $C$ sufficiently large. Then 
$$|\{(x,y,z,v,u) \in A: y\cdot v = t, \ \text{or} \ x=z, \ \text{or} \ y=u\}| \leq 5|E|^2q^3.$$
\end{lemma}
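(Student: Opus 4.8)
The plan is to bound each of the three degeneracy types separately and then sum. Recall that a generic element of $A$ is counted by $\frac{1}{2}|E|^5 q^{-5} \le |A| \le 2|E|^5 q^{-5}$, so in each case I want a bound of order $|E|^2 q^3$, which is smaller than $|A|$ by roughly a factor of $q^8/|E|^3$; under the hypothesis $|E| \ge Cq^{5/2}$ this ratio is at least $c q^{1/2}$, so the degenerate elements are indeed a vanishing proportion. The strategy throughout is to remove one of the defining constraints, perform the corresponding sum over that free variable using the uniform-type bound $\sum_{w \in E} D_t(u,w) \le \tfrac{22|E|}{5q}$ on the degree in the dot-product graph, and to handle the remaining cycle sums by the fourth-moment (or incidence) estimates from \cite{IJM2021}.

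First I would treat the constraint $y \cdot v = t$. By definition of $A$ we already have $v \cdot u = t$, so introducing $y \cdot v = t$ replaces the single edge at $v$ by two edges meeting $v$. I would sum over $v$ last, writing
$$
\sum_{x,y,z,u \in E} D_t(x,y)D_t(y,z)D_t(z,u)D_t(u,x) \sum_{v \in E} D_t(y,v)D_t(v,u).
$$
The inner sum counts common neighbors of $y$ and $u$, which is at most $\min\bigl(\sum_v D_t(y,v), \sum_v D_t(v,u)\bigr) \le \tfrac{22|E|}{5q}$, and the outer sum is the $4$-cycle count $C_4^{prod}$, bounded by $O(|E|^4 q^{-4})$ via Theorem~1.2 of \cite{IJM2021}. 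This produces a bound of order $|E|^5 q^{-5}$, which is far below $|E|^2 q^3$ in the relevant range; in fact this case is the most comfortable one.

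Next I would treat $x = z$. Setting $x = z$ collapses the $4$-cycle $x\!-\!y\!-\!z\!-\!u\!-\!x$ into a pair of paths of length two sharing both endpoints, i.e. the count becomes
$$
\sum_{x,y,u \in E} D_t(x,y)D_t(y,x)D_t(x,u)D_t(u,x) \sum_{v \in E} D_t(v,u),
$$
so after the trivial $v$-sum ($\le \tfrac{22|E|}{5q}$) I am left with summing $D_t(x,y)D_t(x,u)$ over $x,y,u$, which counts ordered pairs of neighbors of each $x$ and is bounded by $\sum_x \bigl(\sum_w D_t(x,w)\bigr)^2 \le |E| \cdot (\tfrac{22|E|}{5q})^2 = O(|E|^3 q^{-2})$. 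Multiplying gives $O(|E|^4 q^{-3})$, again dominated by $|E|^2 q^3$ since $|E|^2 \le q^4 \cdot (|E|/q^2)^2$ and one more factor of $q^5/|E|^2 \ge c$ is available. The case $y = u$ is symmetric: setting $y=u$ merges the two edges $z\!-\!u$ and $v\!-\!u$ at the vertex $u=y$, and an identical degree-squared estimate applies.

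The step I expect to be the real obstacle is not any single sum but the bookkeeping needed to make the constant in $5|E|^2 q^3$ honest across all three cases simultaneously, since the constants coming from the error terms in \cite{IJM2021} (the $12q^{-1/2}$, $8q^5/|E|^2$, $28q^2/|E|$ pieces) must be controlled uniformly using only $|E| \ge Cq^{5/2}$ with a fixed $C$. My bounds above are all of order $|E|^4 q^{-3}$ or smaller, which is $\le |E|^2 q^3 \cdot (|E|^2/q^6)$; for this to be $\le |E|^2 q^3$ one needs $|E| \le q^3$, which is automatic, but to reach the clean constant $5$ I would verify that each of the three contributions is at most $\tfrac{5}{3}|E|^2 q^3$ with $C$ large. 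The delicate point is therefore ensuring that the weakest hypothesis $|E| \ge Cq^{5/2}$ — rather than the stronger $Cq^{11/4}$ of Theorem~\ref{main} — already suffices here, so I would track the exponents carefully to confirm that every degenerate count sits comfortably below $|E|^2 q^3$ throughout the range $Cq^{5/2} \le |E| \le q^3$.
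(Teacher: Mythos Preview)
Your treatment of the cases $x=z$ and $y=u$ is fine and essentially matches the paper: both collapse to a path/star count of order $|E|^4 q^{-3}$, and since $|E|\le q^3$ this is at most $|E|^2 q^3$. The genuine gap is in the case $y\cdot v=t$. Bounding the inner sum $\sum_v D_t(y,v)D_t(v,u)$ by the degree bound $\tfrac{22|E|}{5q}$ and the outer sum by the $4$-cycle count gives only $O(|E|^5 q^{-5})$, which is the \emph{same} order as $|A|$ itself---so it does not show these quintuples are a small proportion of $A$. Your claim that $|E|^5 q^{-5}$ is ``far below $|E|^2 q^3$'' is false once $|E|>q^{8/3}$; in particular it fails throughout the range $|E|\ge Cq^{11/4}$ where the lemma is actually applied. (Your opening ratio computation has the inequality backwards for the same reason.)

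What is missing is a geometric input, not just graph-degree bookkeeping. The paper observes that when $y\cdot v=t$ is imposed, all three of $x,z,v$ satisfy both $a\cdot y=t$ and $a\cdot u=t$; for $y\neq u$ these two hyperplanes meet in at most a line, so there are at most $q$ choices for each of $x,z,v$. Choosing $(y,u)$ freely in $E^2$ and then $x,z,v$ on that line gives the bound $|E|^2 q^3$ directly, with no cycle count needed. Equivalently, in your setup you should bound the common-neighbour sum $\sum_v D_t(y,v)D_t(v,u)$ by $q$ (the size of a line) rather than by the degree $\tfrac{22|E|}{5q}$; but even then you must also notice that $x$ and $z$ lie on the \emph{same} line, which is what makes this case tractable. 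The constant $5$ then drops out as $1+2+2$ from the three cases.
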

\begin{proof}
There are at most $|E|^2$ ways to produce a pair of distinct points $u,y$ in $E$. Then, the intersection of the planes defined by $a \cdot u = t$ and $a \cdot y = t$ is at most a line since these planes are distinct. There are at most $q^3$ ways to choose 3 points on that line, $x,z,u\in E$.  So, there are at most $|E|^2q^3$ quintuples $(x,y,z,v,u)\in A$ with $y\cdot v=t$. For the case when $y=u$, by Corollary 4.5 in \cite{IJM2021} there are at most $2\frac{|E|^4}{q^3} \le 2|E|^2q^3$ such quadruples of points $(x,y,z,v)$ such that $x \cdot y = z \cdot y = v \cdot y = t$. For the case when $x=z$, by Theorem 2.2 in \cite{IJM2021}, there are at most $2\frac{|E|^4}{q^3}$ such quadruples of points $(x,y,u,v)$ with $x\cdot y = x\cdot u = u\cdot v =t$.  The conclusion follows.
\end{proof}

\vskip.125in

Let 
$$
A'=\{(x,y,z,v,u)\in A: y\cdot v\neq t, \ x\neq z, \ y\neq u\}.
$$
If $|E|\geq Cq^{\frac{11}{4}}$, then 
$$
|A\setminus A'|\leq 5|E|^2q^3\leq \frac{5}{C^3}\frac{|E|^5}{q^5},
$$
and thus $|A'|\geq \left(\frac{1}{2}-\frac{5}{C^3}\right)\frac{|E|^5}{q^5}$, in particular $|A'|\geq \frac{|E|^5}{4q^5}$.

\vskip.125in

\begin{remark}\label{folding}
We are now ready to take advantage of the symmetry of the configuration in Figure \ref{main_whole_config}.  Ignoring $y_1,y_2,y_3$ for now, we can realize the rest of the configuration by taking a pair of quintuples $(x,y,z,u,v),(x',y,z,u',v)\in A'$ sharing the points $y,z$, and $v$.  
\end{remark}

For ease of notation we let $A'$ denote both the set and its indicator function. Let
$$
f(y,z,v)=\sum_{x,u\in E}A'(x,y,z,u,v).
$$
Then
$$
\frac{|E|^{10}}{16q^{10}}\leq |A'|^2=\left(\sum_{y,z,v\in E}{f(y,z,v)}\right)^2
$$
By Cauchy-Schwarz, and noting that $D_t(y,z)=1$ whenever $f(y,z,v)\neq 0$, this is bounded by
$$
\left(\sum_{y,z,v}{f(y,z,v)^2}\right)\left(\sum_{y,z,v}{D_t(y,z)}\right)
$$
But for $y\in E$, $\sum_z{D_t(y,z)}\leq \frac{5|E|}{q}$, so
$$
\frac{|E|^7}{80q^9}
\leq \sum_{y,z,v}{f(y,z,v)^2}
=\sum_{y,z,v,}\left(\sum_{x,u}{A(x,y,z,u,v)}\right)^2
$$
$$
=\sum_{x,x',y,z,u,u',v}{A(x,y,z,u,v)A(x',y,z,u',v)}.
$$
\vskip.125in
By the one-to-one correspondence noted in Remark \ref{folding}, the number of ordered tuples of vectors $(x_1, x_2, x_3, y_{12}, y_{13}, y_{23}, y_{123}) \in E^7$ such that
\begin{itemize}
	\item $x_1 \cdot y_{123} = x_2 \cdot y_{123} = x_3 \cdot y_{123} = t$
	\item $x_1 \cdot y_{12} = x_2 \cdot y_{12} = t$
	\item $x_1 \cdot y_{13} = x_3 \cdot y_{13} = t$
	\item $x_2 \cdot y_{23} = x_3 \cdot y_{23} = t$
	\item $x_2 \cdot y_{13} \neq t$
	\item $x_1 \neq x_2$, $x_3 \neq x_2$
	\item $y_{123} \neq y_{12}$, $y_{123} \neq y_{23}$
\end{itemize}

\vskip.125in 

is at least $\frac{|E|^7}{80q^9}$. Figure \ref{main_almost_whole_config} below represents such a tuple.

\vskip.125in 

\begin{figure}[h!]
	\includegraphics[scale=0.15]{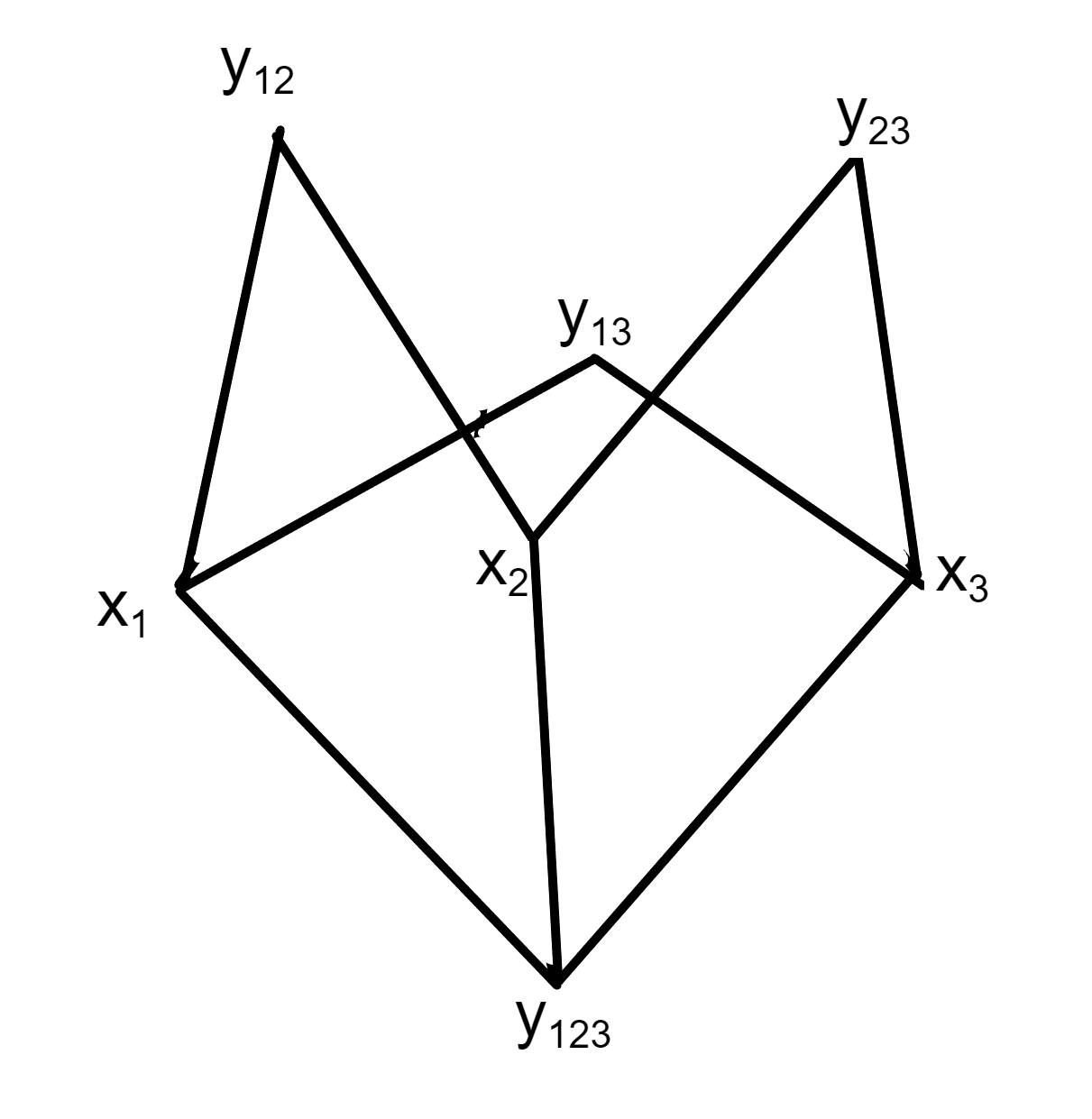}
	\caption{Result of using Cauchy Schwarz}
	\label{main_almost_whole_config}
\end{figure}

We give a lower bound for the number of these tuples where $x_1 \neq x_3$. Suppose $x_1 = x_3$. Then we have six points $x_1, x_2, y_{12}, y_{13}, y_{23}, y_{123} \in E$ where 
\begin{itemize}
	\item $x_1 \cdot y_{123} = x_2 \cdot y_{123} = t$
	\item $x_1 \cdot y_{12} = x_2 \cdot y_{12} = t$
	\item $x_1 \cdot y_{23} = x_2 \cdot y_{23} = t$
	\item $x_1 \cdot y_{13} = t$
	\item $x_2 \cdot y_{13} \neq t$
	\item $x_1 \neq x_2$
	\item $y_{123} \neq y_{12}$, $y_{123} \neq y_{23}$
\end{itemize}

\vskip.125in 

We count the number of such tuples, summing first in $y_{13}$ and then handling the remaining sum with Lemma \ref{degeneracy_case}.  In the notation of Lemma \ref{degeneracy_case}, the sum in the second line of the following calculation corresponds to the case when $y\cdot v =t$.  
$$
	\sum_{x_1, x_2, y_{12}, y_{13}, y_{23}, y_{123} \in E}  D_t(x_1,y_{123})D_t(x_1,y_{12})D_t(x_1,y_{23})D_t(x_2,y_{123})D_t(x_2,y_{12})D_t(x_2,y_{23}) D_t(x_1, y_{13}) 
$$
$$
	\le \frac{5|E|}{q}\sum_{x_1, x_2, y_{12}, y_{23}, y_{123} \in E} D_t(x_1,y_{123})D_t(x_1,y_{12})D_t(x_1,y_{23})D_t(x_2,y_{123})
	D_t(x_2,y_{12})D_t(x_2,y_{23}) 
$$
$$
	\le \frac{5|E|}{q} \cdot |E|^2q^3
	= 5|E|^3q^2
	\leq  \frac{|E|^7}{800q^9}.
$$
It follows that there exist at least $$\frac{|E|^7}{80q^9}-\frac{|E|^7}{800q^9} \ge \frac{9|E|^7}{800q^9}$$ distinct tuples of vectors $(x_1, x_2, x_3, y_{12}, y_{13}, y_{23}, y_{123}) \in E^7$ such that
\begin{itemize}
	\item $x_1 \cdot y_{123} = x_2 \cdot y_{123} = x_3 \cdot y_{123} = t$
	\item $x_1 \cdot y_{12} = x_2 \cdot y_{12} = t$
	\item $x_1 \cdot y_{13} = x_3 \cdot y_{13} = t$
	\item $x_2 \cdot y_{23} = x_3 \cdot y_{23} = t$
	\item $x_2 \cdot y_{13} \neq t$
	\item $x_1 \neq x_2$, $x_3 \neq x_2$, $x_1 \neq x_3$
	\item $y_{123} \neq y_{12}$, $y_{123} \neq y_{23}$
\end{itemize}

\vskip.125in 

Furthermore, for any such tuple, $y_{12} \cdot x_3 \neq t$. To see why, suppose otherwise. Then, both $y_{12}$ and $y_{123}$ lie on the intersection of the planes defined by $x_1 \cdot y = t$, $x_2 \cdot y = t$, and $x_3 \cdot y = t$. The intersection of two of these planes is either a line or the null set, since they are distinct. So, the intersection of all three is either a line, point, or the null set. Since two distinct points lie on the intersection, it must be a line. Furthermore, it must be the same line that is the intersection of any two of these planes. That is, if $y_{13} \cdot x_1 = t$ and $y_{13} \cdot x_3 = t$, then $y_{13} \cdot x_2 = t$ as well, a contradiction. By analogous reasoning $y_{23} \cdot x_1 \neq t$.

\vskip.125in 

Now, fix one such tuple and observe that there are at least $\frac{|E|}{5q}$ vectors $y_1 \in E$ such that $x_1 \cdot y_1 = t$. However, there are at most $q$ such $y_1$ where $x_2 \cdot y_1 = t$, since the intersection of the planes corresponding to $x_1$ and $x_2$ is at most a line. Likewise, there are at most $q$ such $y_1$ where $x_3 \cdot y_1 = t$. Since $\frac{|E|}{5q} > 2q$, there exist a $y_1$ with $x_1 \cdot y_1 = t$, $x_2 \cdot y_1 \neq t$, and $x_3 \cdot y_1 \neq t$. We can also produce $y_2$ and $y_3$ in $E$ with analogous properties. Since there are at most $3y^2$ vectors $y \in E$ such that $y \cdot x_i = t$ for some $i = 1,2,3$, we can also obtain a $y^* \in E$ where $y^* \cdot x_i \neq t$ for all $i = 1,2,3$.

We have obtained a sequence of vectors in $E$, $
\{x_1, x_2, x_3, y_1, y_2, y_3, y_{12}, y_{13}, y_{23}, y_{123}, y^*
\}$ such that 
\begin{itemize}
\item $x_1 \cdot y_{123} = x_2 \cdot y_{123} = x_3 \cdot y_{123} = t$\
\item $x_1 \cdot y_{12} = x_2 \cdot y_{12} = t$, $x_3 \cdot y_{12} \neq t$, and similarly for $y_{13}$ and $y_{23}$
\item $x_1 \cdot y_1 = t$, $x_2 \cdot y_1, x_3 \cdot y_1 \neq t$, and similarly for $y_1$ and $y_2$
\item $x_1 \cdot y^*, x_2 \cdot y^*, x_3 \cdot y^* \neq t$,
\end{itemize}
as desired.

\vskip.25in

\vskip.125in 

\end{document}